%
%
\documentclass[a4paper, 12pt]{article}
\usepackage{amsmath, amsthm, amssymb,times}
\usepackage[all]{xy}
\usepackage{graphicx}

\voffset=0mm
\headheight=0mm
\topmargin=0mm
\oddsidemargin=0mm
\evensidemargin=0mm
\textheight=240mm
\textwidth=162mm
\abovedisplayskip=0pt
\abovedisplayshortskip=0pt
\belowdisplayskip=0pt
\belowdisplayshortskip=0pt

\numberwithin{equation}{section}

\theoremstyle{definition}
\newtheorem{theorem}{Theorem}[section]
\newtheorem{lemma}[theorem]{Lemma}
\newtheorem{cor}[theorem]{Corollary}
\newtheorem{prop}[theorem]{Proposition}
\newtheorem{fact}[theorem]{Fact}
\newtheorem{remark}[theorem]{Remark}
\newtheorem{definition}[theorem]{Definition}

\newcommand{\Q}{\mathbb Q}
\newcommand{\C}{\mathbb C}
\renewcommand{\S}{\mathcal{S}}
\newcommand{\GL}[1]{{\rm GL}{\left( #1 \right)}}
\newcommand{\SU}[1]{{\rm SU}{\left( #1 \right)}}
\newcommand{\ctr}[1]{{Z}( #1 )}
\newcommand{\Z}{{\mathbb Z}}
\newcommand{\Aut}[1]{{\rm Aut}\left( #1 \right)}
\newcommand{\Out}[1]{{\rm Out}\left( #1 \right)}
\newcommand{\Inn}[1]{{\rm Inn}\left( #1 \right)}
\newcommand{\End}[1]{{\rm End}\left( #1 \right)}
\newcommand{\Ad}[1]{{\rm Ad}{#1} }
\newcommand{\ap}{\mathcal{A}\phi}
\newcommand{\Hom}[1]{{\rm Hom}\left( #1 \right)}
\renewcommand{\ker}{\operatorname{Ker}}

\newcommand{\SL}{{\rm SL}}
\newcommand{\M}{\mathcal{M}}
\newcommand{\mcg}{\M_{g,*}}
\newcommand{\fg}{\pi_1(\Sigma_g, *)}
\newcommand{\HG}{\Hom{\fg, G}}

\renewcommand{\bold}[1]{\smallskip \noindent {\bf \boldmath #1 }\nopagebreak[4]}
\newcommand{\PM}{\operatorname{\mathcal{PM}}_{g,n}^{p}}
\newcommand{\R}{\mathbb{R}}
\newcommand{\PSL}[1]{\operatorname{PSL}(#1)}

\begin{document}
\title{On visualization of the linearity problem for   mapping class groups of surfaces}
\author{Yasushi Kasahara}
\date{}

\maketitle

\begin{abstract}
 We derive two types of linearity conditions for  mapping class groups of orientable surfaces:
 one for once-punctured surface, and the other for  closed surface, respectively.
 For the once-punctured case, the condition is described in terms of the action of 
 the mapping class  group  on the deformation space  of  linear representations of the 
 fundamental group of the corresponding
 closed surface. For the closed case, the condition is described in terms of the 
 vector space generated  by the isotopy classes of essential simple closed curves on the corresponding 
 surface. The latter condition also describes the linearity  for the mapping class group of
 compact orientable surface with  boundary, up to center.
\end{abstract}

\section{Introduction} \par

One of the fundamental problems on mapping class groups of surfaces is 
the {\em linearity problem}. It asks whether or not
the mapping class group in question admits a faithful {\em finite dimensional} linear representation
over some field. If a group admits a faithful finite dimensional linear representation over a 
particular field $K$, we will say that the group is {\em $K$-linear}.
\par

Let $\Sigma_g$ be an orientable closed  surface of genus $g$, and $\M_g$ its mapping class group.
By definition, $\M_g$ is the group of the isotopy classes of the orientation preserving 
homeomorphisms of $\Sigma_g$. 
We also consider the once-punctured surface $\Sigma_{g,*}$, 
by which we mean the pair of the surface $\Sigma_g$ and a distinguished marked point $* \in \Sigma_g$.
The mapping class group of $\Sigma_{g,*}$, denoted by $\mcg$, 
is defined as the group of the isotopy classes of the 
orientation preserving homeomorphisms which preserve the marked point where the isotopy is 
assumed to always fix the marked point.
As for the case of genus $g=1$, it is classically known that both $\M_1$ and $\M_{1,*}$ are isomorphic 
to $\SL(2, \Z)$, and hence are $\Q$-linear.  
The linearity of $\M_2$ was established  rather recently by Korkmaz \cite{korkmaz}
and Bigelow--Budney \cite{bigelow-budney},  using the celebrated works 
by Bigelow \cite{bigelow} and Krammer \cite{krammer} that Artin's braid groups are linear. 
However, the linearity of $\M_g$ for $g \geq 3$,  and also the linearity of $\mcg$ for $g \geq 2$, both
seem to remain open. 
\par \medskip

The purpose of this paper is to derive two types of conditions each of which is  
{\em equivalent} to the linearity of $\mcg$,  and that of $\M_g$, respectively. 
For the former case, in Section \ref{once-punctured}, we show that the 
$K$-linearity of $\mcg$ for $g \geq 2$ is equivalent to
the existence of a faithful $K$-linear representation of $\fg$ which represents 
a {\em global fixed point} of the action of $\M_g$ on the corresponding 
deformation space (Theorem \ref{ThmA}). We also combine this condition with the recent result by
Korkmaz \cite{korkmaz2} to observe that such a global fixed point does not exist in low dimensions
with $K=\C$ (Remark \ref{low_degrees}).
For the latter case, in Section \ref{closed}, we obtain a linearity condition for $\M_g$ with $g \geq 3$
in terms of the vector space generated by the isotopy classes of essential simple closed curves on $\Sigma_g$
(Corollary \ref{linearity_closed_case}). 
In the course of its proof, we also obtain a linearity condition {\em up to center} for the case of 
a compact connected orientable surface of genus $g \geq 1$ possibly with boundary 
(Theorem \ref{linearity_up_to_center}). 
\par

In our arguments to derive the linearity conditions, we need criteria for the faithfulness 
of a representation. These are given by Lemmas \ref{Lemma1} and \ref{injectivity_up_to_center}, 
respectively. We remark that these two criteria are based on a common observation: a homomorphism 
of groups is injective if it is injective on {\em a subset with trivial centralizer} 
which is invariant under conjugation.
\par\medskip
		 \bold{Conventions.} In this paper, we adopt the following two conventions.
		  The multiplication of two loops is read from left to right. Therefore, the multiplication 
		 $\gamma_1 \gamma_2$ of two loops $\gamma_1$ and $\gamma_2$ is the loop which first 
		 traverses $\gamma_1$ and then $\gamma_2$. 
		 The composition of two mappings is read from right to left. Hence for the 
		 multiplication $fh$ of two elements of the mapping class group, the class $h$ is applied 
		 first, and then $f$ is applied.
		 These conventions affect especially the description of the homomorphism $i$ in 
		 \eqref{Birman_exact}.
\par\medskip

\bold{Notation.} 
For a field $K$, $\End{n,K}$ denotes the $K$-vector space consisting of all the square 
matrices of degree $n$ over $K$. The group of the invertible matrices in $\End{n,K}$ is denoted
by $\GL{n,K}$. For a $K$-vector space $V$, $\GL{V}$ denotes the group of the $K$-linear isomorphisms of $V$.
For a group $G$, $\Aut{G}$ denotes the group of the automorphisms of $G$.
\par\medskip

\bold{Acknowledgements.}
The author is grateful to Louis Funar and Makoto Sakuma for valuable discussions and comments. 
He is grateful to Masatoshi Sato for an enlightening conversation. He is grateful to the referee for 
helpful comments.
The author was partially supported by the Grant-in-Aid
for Scientific Research (C) (No.23540102) from the Japan Society for Promotion of Sciences.
\par

\section{Once-punctured surface} \label{once-punctured} \par

Recall that $\mcg$ denotes the mapping class group of the once-punctured surface $\Sigma_{g,*}$.
In this section, we assume $g \geq 2$ and derive a condition equivalent to the linearity of $\mcg$.

\subsection{Preliminary}\par

The mapping class group $\mcg$ naturally  acts on the fundamental group $\fg$ of $\Sigma_g$ based at 
the marked point $*$, 
which gives rise to a homomorphism 
\begin{equation} \label{natural_action}
 (\cdot)_* : \mcg \to \Aut{\fg}.
\end{equation}
By a version of the classical Dehn--Nielsen theorem, this homomorphism is {\em injective} 
({\em c.f.} Farb--Margalit \cite{fm}).
\par\medskip

Forgetting the marked point $*$ induces 
a homomorphism of  $\mcg$ onto $\M_g$, the mapping class group of the {\em closed} surface 
$\Sigma_g$. The kernel of this homomorphism is isomorphic to $\fg$, and hence 
we obtain {\em the Birman exact sequence} \cite{birman_book} (see also \cite{fm}):
  \begin{equation}
	 1\; \xrightarrow \;  \fg \; \xrightarrow{i} \; 
	 \mcg\; \xrightarrow{j} 
	 \; \M_g\; \xrightarrow \; 1
         \label{Birman_exact}
   \end{equation} 
	         Here we need to take some care about the definition of the mapping $i$.
	         Under our convention, which seems opposite to the convention in \cite{birman_book},
	         that the multiplication of two mapping classes is read from right to left, the original 
	         mapping defined for $i$ by Birman \cite{birman_book} 
	         (``spin map'')  is an anti-homomorphism of group. 
                 To avoid this, we define $i$ as the homomorphism obtained from the 
                 original mapping  by pre-composing the anti-automorphism of $\fg$ defined by the 
                 correspondence $\gamma \mapsto \gamma^{-1}$ for each $\gamma \in \fg$.
		 Explicitly, our mapping $i$ is  described as follows.
 	         For any $\gamma \in \fg$, let $l: [0,1] \to \Sigma_g$ be 
		 its representing loop with $l(0) = l(1) = * \in \Sigma_g$. Considering $l$ 
		 as an isotopy of the marked point $*$, take the isotopy 
	        $H: \Sigma_g \times [0,1] \to \Sigma_g$ extending $l$ and starting with the identity 
	        so that $H(*,t) = l(t)$ for 
	        each $t \in [0,1]$ and the initial homeomorphism $H(\cdot, 0)$ is the identity 
	        of $\Sigma_g$. Denote the ending homeomorphism $H(\cdot, 1)$ 
	         by $h_l$, which is actually a homeomorphism $\Sigma_{g,*} \to \Sigma_{g,*}$. 
	         Then the image $i(\gamma)$ is the isotopy class 
	         of $h_l^{-1}$ in $\M_{g,*}$. For more details, we refer to \cite{fm}.
\par\medskip

Hereafter, for simplicity of notation, 
we identify $\fg$ with its image in $\mcg$ under $i$, and omit the symbol $i$, which would cause no confusion.

\par

The following is a fundamental property of the Birman exact sequence:
\begin{equation} \label{fundamental_property}
 f \cdot \gamma \cdot f^{-1} = f_*(\gamma) \quad \text{for $f \in \mcg$, and $\gamma \in \fg$}
\end{equation}
where the multiplication in the left-hand side is the one in $\mcg$, and 
$f_*$ denotes the natural action of $f$ on $\fg$.
In other words, the natural action of $\mcg$ on $\fg$ coincides with the conjugation action in $\mcg$. 
\par\medskip

Now we obtain the following as a combination of the Dehn--Nielsen theorem and the Birman exact sequence:
\begin{lemma} \label{Lemma1}
 Let $\varphi: \mcg \to G$ be an  arbitrary homomorphism into any group $G$.
Then $\varphi$ is injective if and only if its restriction to $\fg$ is injective.
\end{lemma}
\begin{proof}
First, the only if part is trivial. Suppose next that $\varphi$ is injective on $\fg$.
For any $f \in \ker{\varphi}$, $\gamma \in \fg$, we have
\begin{align*}
 \varphi(f_* (\gamma))  & = \varphi(f \cdot \gamma \cdot f^{-1}) \quad 
                                                    \text{by \eqref{fundamental_property}} \\
                      & = \varphi(f) \varphi(\gamma) \varphi(f^{-1}) = \varphi(\gamma).
\end{align*}
Since $\varphi$ is injective on $\fg$, we  have $f_* (\gamma) = \gamma$
for each $\gamma \in \fg$. Therefore, we have $f = 1$, by the above Dehn--Nielsen theorem. 
This completes the proof.
\end{proof}
\par
%
\subsection{Deformation space} \par

We next consider the problem when a given representation of $\fg$ extends to that of $\mcg$.
To do so, it turns out that the language  of deformation space by Goldman is appropriate, which we 
now recall from \cite{goldman}.
\par

Let $G$ be an arbitrary group. Its automorphism group is denoted by $\Aut{G}$.
We denote by $R_G = \HG$ the set of {\em all} homomorphisms $\fg \to G$. 
The product group $\mcg \times \Aut{G}$ acts on $R_G$ on the left, by pre- and post-composition:
if $f \in \mcg$, and $k \in \Aut{G}$, then the action of $(f, k)$ on $\phi \in R_G$ is given by 
$$(f, k) \cdot \phi  :=  k \circ \phi \circ f_*^{-1}:   \quad
	 \fg\; \xrightarrow{f_*^{-1}} \;  \fg \; \xrightarrow{\phi} \; G\; \xrightarrow{k}  \; G.$$
This action gives rise to actions of both $\mcg$ and $\Aut{G}$ on $R_G$ via the injective homomorphisms
\begin{align*}
 \mcg\; \xrightarrow{{\rm id} \times 1} \; \mcg \times \Aut{G}, \quad & f  \mapsto (f,1); \\
 \Aut{G}\; \xrightarrow{1 \times {\rm id}} \; \mcg \times \Aut{G}, \quad & k \mapsto (1,k),
\end{align*}
respectively.
\par

The {\em deformation space} is the quotient of $R_G$ by the subgroup $\Inn{G}$ of $\Aut{G}$ consisting of 
all the {\em inner automorphisms} of $G$, and is denoted by $X_G = R_G/G$.

Since the actions of $\mcg$ and $\Aut{G}$ on $R_G$ 
are commutative, the action of $\mcg$ on $R_G$ descends to the action on $X_G$. 
On the other hand, in view of  \eqref{fundamental_property}, the action of $\gamma \in \fg$ 
on an element $\phi$ of $R_G$ as an element of $\mcg$ coincides with that of the inner automorphism of $G$ 
induced by $\phi(\gamma)^{-1}$. Hence the normal subgroup $\fg$ of $\mcg$ acts on 
$X_G$ trivially, and therefore  the action of $\mcg$ on $X_G$ descends, via $j$, 
to that of $\M_g$, the mapping class group of the {\em closed} surface 
$\Sigma_g$. In this terminology, a necessary condition for our problem is given as follows.
\begin{lemma} \label{Lemma2}
 If $\phi \in R_G$ extends  to a homomorphism $\mcg \to G$, then 
 its representing class $[\phi] \in X_G$ is a {\em global fixed point} of 
 the $\M_g$-action on $X_G$.
\end{lemma}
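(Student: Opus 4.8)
The plan is to suppose $\phi\in R_G(\Gamma)$ extends to a homomorphism $\varphi:\M\to G$, and to show that $[\phi]\in X_G(\Gamma)$ is fixed by every element of $\overline{\M}=\M/\Gamma$. Fix $f\in\M$; I must show that $f\cdot[\phi]=[\phi]$ in $X_G(\Gamma)$, i.e.\ that $\phi\circ\iota_\Gamma(f)^{-1}$ and $\phi$ differ by an inner automorphism of $G$, since then the two representatives have the same image in $R_G(\Gamma)/\iota(G)=X_G(\Gamma)$, and passing to the quotient by $\Gamma$ is then automatic (the $\overline{\M}$-action is the one induced by the $\M$-action, which the excerpt has already shown to descend).

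The key computation is to evaluate $f\cdot\phi$ on an arbitrary $\gamma\in\Gamma$ using $\varphi$. By the definition \eqref{action_on_R} of the action and the formula \eqref{rho_Gamma} for $\iota_\Gamma$, we have
\begin{equation*}
 (f\cdot\phi)(\gamma)=\phi\bigl(\iota_\Gamma(f)^{-1}(\gamma)\bigr)=\phi\bigl(f^{-1}\gamma f\bigr).
\end{equation*}
Now $f^{-1}\gamma f\in\Gamma$ because $\Gamma$ is normal in $\M$, so $\phi$ is defined on it and, since $\varphi$ extends $\phi$, we get $\phi(f^{-1}\gamma f)=\varphi(f^{-1}\gamma f)=\varphi(f)^{-1}\varphi(\gamma)\varphi(f)=\varphi(f)^{-1}\phi(\gamma)\varphi(f)$. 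Writing $g=\varphi(f)^{-1}\in G$, this says $(f\cdot\phi)(\gamma)=\iota(g)(\phi(\gamma))$ for all $\gamma\in\Gamma$ in the notation of \eqref{inn_auto}, i.e.\ $f\cdot\phi=\iota(g)\circ\phi$. Hence $f\cdot\phi$ lies in the $\iota(G)$-orbit of $\phi$, so $f\cdot[\phi]=[\phi]$ in $X_G(\Gamma)$.

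Finally, since this holds for every $f\in\M$, and the $\M$-action on $X_G(\Gamma)$ factors through $\overline{\M}$ as recalled just before the statement, $[\phi]$ is a global fixed point of the $\overline{\M}$-action, which is what was to be proved. There is no real obstacle here: the only points needing care are bookkeeping ones — keeping the inverses straight in \eqref{action_on_R} and \eqref{rho_Gamma}, and noting that normality of $\Gamma$ is exactly what guarantees $f^{-1}\gamma f$ stays in the domain of $\phi$ so that the extension property of $\varphi$ can be invoked. It is worth remarking that the converse direction (reconstructing an extension from a global fixed point) is genuinely harder and is \emph{not} part of this lemma; that is presumably where the later sections do the substantive work.
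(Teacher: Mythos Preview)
Your proof is correct and follows essentially the same approach as the paper's own argument: both compute $(f\cdot\phi)(\gamma)=\phi(f^{-1}\gamma f)=\varphi(f^{-1})\phi(\gamma)\varphi(f^{-1})^{-1}$ using the extension $\varphi$, and conclude that $f\cdot\phi$ differs from $\phi$ by the inner automorphism $\iota(\varphi(f)^{-1})$. Your additional remarks on the role of normality and the non-triviality of the converse are accurate and helpful but go slightly beyond what the paper records.
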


\begin{proof}
 Suppose that $\varphi : \mcg \to G$ is a homomorphism extending $\phi \in R_G$.
 We then have,  for every $f \in \mcg$ and  $\gamma \in \fg$,
  \begin{align*}
     (f \cdot \phi)(\gamma) & = \phi(f_{*}^{-1} (\gamma)) = \phi(f^{-1} \cdot \gamma \cdot f) \\
       &  = \varphi(f^{-1} \cdot \gamma \cdot f) = \varphi(f^{-1}) \varphi(\gamma) \varphi(f^{-1})^{-1} \\
       &  = \varphi(f^{-1})\phi(\gamma)\varphi(f^{-1})^{-1}.
   \end{align*}
 Here, for the second equality, we used the formula \eqref{fundamental_property}.
 This shows that the action of $f$ on $\phi$ coincides with that of the inner automorphism of $G$ 
 defined by $\varphi(f)^{-1}$, and hence $f \cdot [\phi] = [\phi]$ in $X_G$.
\end{proof}
\par

\subsection{A global fixed point and a representation of $\mcg$} \par

It seems that the converse of Lemma \ref{Lemma2} is not true, 
partly because of  non-trivial centralizer of $\phi(\fg)$ in $G$. 
However, when $G=\GL{n,K}$, any representation in $R_G$ can be associated with 
another linear representation of $\fg$ so that the associated representation 
always extends to a linear representation of $\mcg$, provided that the initial 
representation corresponds to a global fixed point in $X_G$. 
Furthermore, any faithful representation is to be  associated with a faithful one.

\par

Now, let $G=\GL{n,K}$ with $K$ a field. 
\begin{definition} \label{ap}
 For an arbitrary linear representation $\phi: \fg \to \GL{n,K}$, the finite dimensional vector space 
$V_{\phi}$ is defined as the subspace of $\End{n,K}$ generated by the set 
$\{\phi(\gamma); \gamma \in \fg \}$. 
The {\em associated linear representation of $\phi$}, denoted by 
      $$\ap: \fg \to \GL{V_{\phi}},$$
is defined by  $\ap(\gamma) (x) = \phi(\gamma) \cdot x \cdot \phi(\gamma)^{-1}$ for $\gamma \in \fg$ 
and $x \in V_{\phi}$.
\end{definition}
\par

\begin{remark} \label{remark_ap}
 The well-definedness of $\ap$ follows from the formula 
 $$\ap(\gamma) (\phi(\xi)) = \phi(\gamma \xi \gamma^{-1}) \quad \text{($\gamma$, $\xi \in \fg$).}$$
Obviously, we have
$$\ker{\phi} \subset \ker{\ap} = \{ \gamma \in \fg ; [\gamma, \fg]   \subset \ker{\phi} \}.$$
In particular, since the center of $\fg$ is trivial for $g \geq 2$, the associated representation 
$\ap$ is faithful if and only if $\phi$ is faithful.
\end{remark}

\begin{lemma} \label{Lemma3}
 If $\phi \in R_{\GL{n,K}}$ represents a global fixed point of the $\M_g$-action on $X_{\GL{n,K}}$, 
 then the associated representation $\ap$ extends uniquely to a linear representation of $\mcg$, denoted by 
 $\Psi : \mcg \to \GL{V_{\phi}}$  such that 
 \begin{equation} \label{caps_psi_def}
  \Psi(f) (\phi(\gamma)) = \phi(f_*(\gamma)) \quad \text{for all $f \in \mcg$ and $\gamma \in \fg$.}
 \end{equation}
\end{lemma}
\begin{proof}
 Suppose that $\phi \in R_{\GL{n,K}}$ represents a global fixed point $[\phi] \in X_{\GL{n,K}}$ for 
 the action  of $\M_g$. Then for each $f \in \mcg$, we have
 $ f^{-1} \cdot [\phi] = [\phi]$ in $X_{\GL{n,K}}$, and hence there exists 
an element $x_f \in \GL{n,K}$ such that 
 \begin{equation} \label{x_f}
  \phi(f_*(\gamma)) = x_f \cdot \phi(\gamma) \cdot x_f^{-1} \quad \text{for each $\gamma \in \fg$}
 \end{equation}
Let $a_f: V_{\phi} \to \End{n,K}$ be the linear homomorphism defined by 
$a_f(M) = x_f \cdot M \cdot x_f^{-1}$ for $M \in V_{\phi}$. Since $V_{\phi}$ is generated 
by $\phi(\fg)$, the formula \eqref{x_f} implies that this $a_f$ 
is actually a linear isomorphism $V_{\phi} \to V_{\phi}$, and is independent of the choice of $x_f$.
\par
Now we define $\Psi(f) = a_f$, which determines $\Psi : \mcg \to \GL{V_{\phi}}$ as a mapping of sets.
If $f = \gamma_0 \in \fg$, we may choose $\phi(\gamma_0)$ as $x_f$ because of 
$\eqref{fundamental_property}$. Therefore, this $\Psi$ is an extension of $\ap$.
The uniqueness of $\Psi$ follows from \eqref{x_f} and the fact that 
$V_{\phi}$ is generated by $\fg$.
\par

Finally, we need to check $\Psi(fh) = \Psi(f) \Psi(h)$ for $f$, $h \in \mcg$.
To do so, we only need to check that this equality holds on $\phi(\fg)$, a generating set of $V_\phi$. 
This is straightforward in view of 
\eqref{x_f}, considering the following diagram:
  $$\xymatrix{
     \fg \ar[rrr]^{\phi} \ar[dr]^{h_*} \ar[dd]_{(fh)_*} & & & \phi(\fg)
      \ar[dl]_{\Psi(h)} \ar[dd]^{\Psi(fh)}   \\
     & \fg \ar[r]^{\phi} \ar[ld]^{f_*} & \phi(\fg) \ar[dr]_{\Psi(f)} & \\
     \fg \ar[rrr]_{\phi }& & & \phi(\fg)
   }$$
This completes the proof of Lemma \ref{Lemma3}

\end{proof}
\par

\subsection{Linearity condition for $\mcg$} \par

Now the desired condition for the linearity of $\mcg$ is given as follows.

\begin{theorem} \label{ThmA}
  Let $K$ be a field. Then $\mcg$ is {\em $K$-linear} if and only if there exists a 
 {\em faithful} linear 
 representation $\phi \in R_{\GL{n,K}}$ for {\em some} $n$ which represents a global fixed point of the
 natural action of $\M_g$ on $X_{\GL{n,K}}$. Furthermore, if such a global fixed point exists, then 
 $\mcg$ admits a faithful linear representation over $K$ of {\em dimension at most $n^2$}.
\end{theorem}
\par

\begin{proof}
 If $\mcg$ admits an $n$-dimensional faithful linear representation over $K$, then
 its restriction to $\fg$ is a faithful representation in $R_{\GL{n,K}}$. 
 By Lemma \ref{Lemma2}, this restriction corresponds to 
 a global fixed point of the $\M_g$-action on $X_{\GL{n,K}}$.
 Suppose conversely that  $\phi \in R_{\GL{n,K}}$ is faithful and represents a global fixed 
 point of the $\M_g$-action on $X_{\GL{n,K}}$. Then, the associated representation $\ap$ in  
 Definition \ref{ap} is faithful as we observed in Remark \ref{remark_ap}.
 By Lemma \ref{Lemma3}, $\ap$ extends to the linear representation $\Psi: \mcg \to \GL{V_{\phi}}$.
 This $\Psi$ is faithful by Lemma \ref{Lemma1}. Finally, the dimension of $V_{\phi}$ is at most $n^2$ 
 because $V_{\phi}$ is a subspace of $\End{n,K}$. This completes the proof.
\end{proof}
\par \medskip

\begin{remark} \label{dynamics_results}
 As described  in Goldman \cite{goldman}, the recent dynamical study of the action 
 of mapping class group on deformation space of the fundamental group of the corresponding surface
 has revealed that this action is a complicated mixture of properly discontinuous action and ergodic one.
 Theorem \ref{ThmA} claims that a faithful linear representation of $\mcg$ 
 can lie only in the extreme opposite to the properly discontinuous part in the deformation space. 
 We also note that even the full ergodicity of the action is not enough to prohibit the existence 
 of a {\em single} global  fixed point.  Also in view of the same observation in \cite{goldman} above,
 Lemma \ref{Lemma2}  seems to explain, at least partly, 
 why so few linear representations of $\mcg$ are known so far.
\end{remark}
\par
\begin{remark}
 	It might be interesting to point out here a consequence of Lemma \ref{Lemma2}, 
        for $G=\PSL{2, \R}$.
	A well known source of elements of the deformation space $X_{\PSL{2,\R}}$ is the hyperbolic 
        structures on $\Sigma_g$. 
	Namely, once the orientation of $\Sigma_g$ is fixed, taking the {\em appropriate} holonomy 
        associates to any hyperbolic structure on $\Sigma_g$ an element of $X_{\PSL{2,\R}}$ 
        which is represented by an injective homomorphism $\fg \to \PSL{2,\R}$ with discrete 
        image in $\PSL{2,\R}$. We note that the set of all such elements of $X_{\PSL{2,\R}}$ can be 
       identified with the Teichm\"uller space of $\Sigma_g$.
	Now one can see, for any element of $X_{\PSL{2,\R}}$ so obtained, say $x$, that 
        its representative homomorphism $\fg \to \PSL{2,\R}$ cannot extend to a 
	homomorphism $\M_{g,*} \to \PSL{2,\R}$. In fact, the stabilizer of $x$ is finite, 
        since it is the image in $\M_g$ of the orientation preserving isometry group 
	of the corresponding hyperbolic structure on $\Sigma_g$. (Alternatively, the same result follows
        from the fact that the action of $\M_g$ on the Teichm\"uller space of $\Sigma_g$ is properly 
        discontinuous.) 
	In particular, $x$ is not a global fixed point of the action of $\M_g$ on $X_{\PSL{2,\R}}$, 
	and hence the result follows from Lemma \ref{Lemma2}.
\end{remark}
\begin{remark}[Low dimensional complex linear representations]\label{low_degrees}
 Let $\Sigma_{g,n}^{p}$ be a compact connected orientable surface of genus $g$ with $n \geq 0$ 
 boundary components and $p \geq 0$ marked points. We denote by $\PM$ the {\em pure} mapping 
 class group of $\Sigma_{g,n}^{p}$ 
 which is defined as the group of the isotopy classes of the orientation preserving 
 homeomorphisms of $\Sigma_{g,n}^{p}$ which fix the boundary and the marked points pointwisely.
 Note that $\operatorname{\mathcal{PM}}_{g,0}^{1}$ coincides with $\M_{g,*}$.
 Recently, several results have appeared on the non-triviality of the kernel of low dimensional
 linear representations of $\PM$ over $K = \C$, such as \cite{FLM}, 
 \cite{franks-handel},  \cite{korkmaz1}, and \cite{korkmaz2}. These results apparently give 
 restrictions on the existence of faithful linear representations of $\PM$. 
 Among such restrictions, the best one is given by Korkmaz \cite{korkmaz2} for $g \geq 3$ 
 that any complex linear representation of $\PM$ of dimension less than or equal to 
 $3g-3$ has non-trivial kernel.
 In our context, this result implies that a global fixed point as in Theorem \ref{ThmA} does not 
 exist, for $K = \C$, in the range $g \geq 3$ and $n \leq \sqrt{3g-3}$.
\end{remark}
\par

\begin{remark}
       It seems that there are few existence results on global fixed points of the action of $\M_g$ on the 
       deformation space.
       A remarkable exception is provided by Andersen \cite{andersen}.
       Let $\SU{n}$ denote the special unitary group of degree $n$. 
       In \cite{andersen}, 
       Andersen proved, for each $g \geq 2$ and for infinitely many integers $n$,  
       that there exists an {\em irreducible} representation $\fg \to \SU{n}$ which represents
       a global fixed point of the action of $\M_g$ on $X_{\SU{n}}$.
       Via the inclusion $\SU{n} \hookrightarrow \GL{n,\C}$, such a global fixed point 
       gives rise to  a global fixed point of the $\M_g$-action on $X_{\GL{n,\C}}$, and hence a complex 
       linear representation of $\M_{g,*}$ by Theorem \ref{ThmA}. 
       We remark that all the linear representations of
       $\M_{g,*}$ so constructed have finite image. This follows from the proof of Theorem \ref{ThmA}, 
       together with the fact that each global fixed point constructed in \cite{andersen}
       is represented by a representation $\fg \to \SU{n}$ whose image is finite.
\end{remark}
\par

\subsection{A comparison with the automorphism group of a free group}\par

Let $F_r$ be a free group of rank $r \geq 2$. Since $F_r$ is center free, 
the inner automorphism group of $F_r$ can be 
naturally identified with $F_r$ so that we have an analogue of the Birman exact sequence:
\begin{equation*}
  	 1\; \xrightarrow \; F_r \; \xrightarrow{} \; 
	 \Aut{F_r}\; \xrightarrow{} 
	 \; \Out{F_r} \; \xrightarrow \; 1
         \label{free_Birman_exact}
\end{equation*}
where $\Out{F_r}$ denotes the outer automorphism group of $F_r$. It can be easily checked that
the action of $\Aut{F_r}$ on $F_r$ by conjugation coincides with the natural action.
The same argument above in this section yields the following analogue of  Theorem \ref{ThmA}.
\begin{prop}
   Let $K$ be a field. Then $\Aut{F_r}$ is {\em $K$-linear} if and only if there exists a 
 {\em faithful} linear 
 representation $\phi \in \Hom{F_r, \GL{n,K}}$ for {\em some} $n$ which represents a global fixed point of the
 natural action of $\Out{F_r}$ on $\Hom{F_r, \GL{n,K}}/{\GL{n,K}}$. 
 Furthermore, if such a global fixed point exists, then 
 $\Aut{F_r}$ admits a faithful linear representation over $K$ of {\em dimension at most $n^2$}.
\end{prop}
On the other hand, Formanek--Procesi \cite{fp}
has shown that $\Aut{F_r}$ is {\em not linear if $r \geq 3$}. For the case of $r=2$, the $\C$-linearity 
of $\Aut{F_2}$ was established by Krammer \cite{krammer_B4}.
Therefore, we see:
\begin{cor}
 (1) For $r \geq 3$, and any field $K$, there does not exist a faithful linear representation 
 in $\Hom{F_r, \GL{n,K}}$ which represents a  global fixed point of the $\Out{F_r}$-action 
 on $\Hom{F_r, \GL{n,K}}/\GL{n,K}$.
 \par
 (2) For $r = 2$ and $K=\C$, there exists such a faithful linear representation in $\Hom{F_r, \GL{n,\C}}$ for 
 certain $n$.
\end{cor}

It might be an interesting problem to find out a direct proof of this corollary.

\section{Compact  orientable surface} \label{closed} \par

Let $\Sigma_{g,n}$ be a compact connected orientable surface of genus $g \geq 1$ with 
$n \geq 0$ boundary components.
We denote by $\M_{g,n}$ the mapping class group of $\Sigma_{g,n}$, the group of the isotopy classes of 
the orientation preserving homeomorphisms of $\Sigma_{g,n}$ where 
both the homeomorphisms and the isotopies are assumed to fix the boundary 
pointwise. Note that $\M_{g,n}$ coincides with $\operatorname{\mathcal{PM}}_{g,n}^0$ 
in Remark \ref{low_degrees}.

\subsection{Preliminary} \par
We collect necessary results on $\M_{g,n}$ to derive our linearity condition.
Let $\S = \S(\Sigma_{g,n})$ be the set of the isotopy classes of the essential simple closed curves on 
$\Sigma_{g,n}$. Here a simple closed curve is said to be {\em essential} if it is not homotopic to a point nor 
parallel to any boundary component of $\Sigma_{g,n}$. 
\par
We define a mapping $\iota: \S \to \M_{g,n}$ by sending each $C \in \S$ to the right-handed Dehn twist 
along $C$. The natural action of $\M_{g,n}$ on $\S$ has the effect on Dehn twists as follows:
\begin{equation} \label{twist_conjugation}
  \iota(f(C)) =  f \cdot \iota(C) \cdot f^{-1}  \quad \text{for $f \in \M_{g,n}$ and $C \in \S$.}
\end{equation}
\begin{fact}\label{fact_injective}
 The mapping $\iota$ is {\em injective}. 
\end{fact}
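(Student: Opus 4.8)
The plan is to prove Fact \ref{fact_injective} by the standard fact that the right-handed Dehn twists $\iota(C)$ and $\iota(C')$ along essential simple closed curves $C, C' \in \S$ commute if and only if $C$ and $C'$ are disjoint (more precisely, have geometric intersection number zero), together with a curve-theoretic argument distinguishing distinct isotopy classes. First I would recall that since $\iota(C)$ is the Dehn twist along $C$, its action on homology and its geometric behaviour are well understood: in particular, for any $D \in \S$, the isotopy class $\iota(C)(D)$ equals $D$ precisely when $i(C,D) = 0$, where $i(\cdot,\cdot)$ is the geometric intersection number. Suppose $\iota(C) = \iota(C')$ as elements of $\M_{g,n}$ with $C \neq C'$ in $\S$. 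Then for every $D \in \S$ we would have $i(C,D) = 0 \iff i(C',D) = 0$, because $\iota(C)$ and $\iota(C')$ fix exactly the same isotopy classes of essential simple closed curves.

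Next I would derive a contradiction from $C \neq C'$ using this coincidence of ``disjointness patterns.'' If $i(C,C') = 0$ but $C \neq C'$, then $C$ and $C'$ can be realized disjointly, and one can find an essential simple closed curve $D$ with $i(C,D) = 0$ but $i(C',D) \neq 0$ (for instance, take $D$ a curve intersecting $C'$ but pushed off a neighbourhood of $C$, using that $\Sigma_{g,n}$ has enough topology since $g \geq 1$); this contradicts the previous paragraph. If instead $i(C,C') \neq 0$, then already $\iota(C)(C') \neq C'$ while $\iota(C')(C') = C'$, so $\iota(C) \neq \iota(C')$ directly. Hence in all cases $C = C'$, proving injectivity.

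Alternatively, and perhaps more cleanly, I would invoke the well-known characterization that the centralizer of $\iota(C)$ in $\M_{g,n}$ is exactly the stabilizer of the isotopy class $C$ (this is a theorem essentially due to the analysis of roots and centralizers of Dehn twists; see Farb--Margalit \cite{fm}), from which $\iota(C) = \iota(C')$ forces $C$ and $C'$ to have the same stabilizer, and then a transitivity/change-of-coordinates argument on $\S$ pins down $C = C'$. I would choose whichever route requires the least machinery; the first, elementary intersection-number argument seems adequate for $g \geq 1$.

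The main obstacle I anticipate is handling the low-complexity cases, especially $\Sigma_{1,0}$ (the torus) and $\Sigma_{1,1}$, where the set $\S$ of essential curves is comparatively small and one must be careful that there genuinely exist enough test curves $D$ to separate $C$ from $C'$; in the torus case one should use that $\iota(C)$ acts on $H_1$ as a nontrivial transvection determined by $[C]$, so $\iota(C) = \iota(C')$ already forces $[C] = \pm[C']$ and hence $C = C'$ since primitive homology classes up to sign correspond bijectively to isotopy classes of essential simple closed curves on the torus. For all $(g,n)$ with $g \geq 2$ or $n \geq 1$ the argument is routine, so the write-up would dispose of the torus separately and then give the general intersection-number argument.
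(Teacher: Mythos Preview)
Your proposal is correct and follows essentially the same approach the paper indicates: the paper does not give its own proof but cites \cite{fm} and describes the argument as showing that distinct $C_1,C_2\in\S$ yield Dehn twists whose natural actions on $\S$ differ, which is precisely your strategy of producing a test curve $D$ (or $C'$ itself when $i(C,C')\neq 0$) on which $\iota(C)$ and $\iota(C')$ disagree. Your case analysis, the intersection-number criterion $\iota(C)(D)=D\iff i(C,D)=0$, and the separate treatment of the torus via $H_1$ are all in line with the standard Farb--Margalit argument the paper has in mind.
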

The proof of this fact, which can be found in \cite{fm}, for instance, is  done 
by showing that for two distinct elements $C_1$ and $C_2 \in \S$, the natural actions 
of the Dehn twists along them on $\S$ are different. On the other hand, 
every element of the center of $\M_{g,n}$, denoted by $\ctr{\M_{g,n}}$, acts trivially 
on $\S$ ({\em c.f.} Fact \ref{fact_injective} and the formula \eqref{twist_conjugation}).
Therefore, we can slightly generalize this fact as follows:
\begin{lemma} \label{lemma_center}
 For $C_1$, $C_2 \in \S$, if $C_1 \neq C_2$, then $\iota(C_1) \iota(C_2)^{-1} \notin \ctr{\M_{g,n}}$.
\end{lemma}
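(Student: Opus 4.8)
The plan is to argue by contradiction, using not merely the injectivity of $\iota$ asserted in Fact~\ref{fact_injective} but the sharper statement actually established in its proof: for distinct $C_1, C_2 \in \S$, the Dehn twists $\iota(C_1)$ and $\iota(C_2)$ induce \emph{different} permutations of the set $\S$ under the natural action of $\M_{g,n}$.

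First I would record the elementary observation alluded to in the text, that every $z \in \ctr{\M_{g,n}}$ acts trivially on $\S$. Indeed, centrality gives $z \cdot \iota(C) \cdot z^{-1} = \iota(C)$ for each $C \in \S$, whence $\iota(z(C)) = \iota(C)$ by \eqref{twist_conjugation}, and Fact~\ref{fact_injective} forces $z(C) = C$.

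Now suppose, for contradiction, that $z := \iota(C_1)\iota(C_2)^{-1}$ lies in $\ctr{\M_{g,n}}$ for some pair $C_1 \neq C_2$. By the previous step, $z$ fixes every element of $\S$; writing $z(E) = E$ and substituting $E = \iota(C_2)(D)$, which ranges over all of $\S$ as $D$ does since $\iota(C_2)$ acts as a bijection, we obtain $\iota(C_1)(D) = \iota(C_2)(D)$ for every $D \in \S$. Thus $\iota(C_1)$ and $\iota(C_2)$ induce the same permutation of $\S$, contradicting the sharper statement recalled above, which provides some $D \in \S$ with $\iota(C_1)(D) \neq \iota(C_2)(D)$. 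Hence no such central $z$ exists, which is the assertion of the lemma.

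The one point requiring care — and the reason the lemma is not a purely formal consequence of Fact~\ref{fact_injective} — is precisely that injectivity of $\iota$ by itself does not preclude the product $\iota(C_1)\iota(C_2)^{-1}$ from being a nontrivial central element; one genuinely needs the ``distinct curves yield distinct actions on $\S$'' form of the argument from \cite{fm}. I would therefore make this sharper form explicit at the outset before running the contradiction, so that the dependence on the proof (rather than just the statement) of Fact~\ref{fact_injective} is transparent.
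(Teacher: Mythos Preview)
Your proof is correct and follows essentially the same route as the paper: the paper likewise observes that central elements act trivially on $\S$ (via \eqref{twist_conjugation} and Fact~\ref{fact_injective}) and then appeals to the sharper form of Fact~\ref{fact_injective}---that distinct curves yield Dehn twists with distinct actions on $\S$---to conclude. Your explicit remark that one needs this stronger statement, not merely the injectivity of $\iota$, is exactly the point the paper is making when it calls the lemma a ``slight generalization.''
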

\par

We also need the following.
\begin{lemma} \label{trivial_action}
 If the natural action of $f \in \M_{g,n}$ on $\S$ is trivial, 
 then $f$ lies in the center $\ctr{\M_{g,n}}$.
\end{lemma}
\begin{proof}
 Suppose $f \in \M_{g,n}$ and $f(C) = C$ for every $C \in \S$. Then,  by \eqref{twist_conjugation}, 
 we have $f \cdot \iota(C) \cdot f^{-1} = \iota(C)$.  Due to Gervais \cite{gervais}, $\M_{g,n}$ is generated, 
 if $g \geq 2$, by $\iota(\S)$, and  if $g = 1$, by $\iota(\S)$ together with the Dehn twists along 
 the simple closed curves parallel to the boundary components of $\Sigma_{g,n}$. 
 Since the latter type of Dehn twists are obviously contained in  $\ctr{\M_{g,n}}$, 
 $f$ commutes with a generating set of $\M_{g,n}$, and hence lies in $\ctr{\M_{g,n}}$. 
\end{proof}

\subsection{Linearity condition} \par

We now consider to derive a linearity condition for $\M_{g,n}$, up to center.
We first prove the following analogue of Lemma \ref{Lemma1}.
\begin{lemma}\label{injectivity_up_to_center}
 Let $G$ be a group, and $\varphi : \M_{g,n} \to G$ be an arbitrary group homomorphism. 
 Then the composition $\varphi \circ \iota$
 is injective if and only if $\ker{\varphi} \subset \ctr{\M_{g,n}}$.
\end{lemma}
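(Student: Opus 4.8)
The plan is to apply Lemma~\ref{Lemma1} in both directions, using the two facts recorded in the Preliminary subsection: that $\iota$ is injective (Fact~\ref{fact_injective}, strengthened to Lemma~\ref{lemma_center}), and that $\cz{\M_{g,n}}{\iota(\S)} = \ctr{\M_{g,n}}$ (Lemma~\ref{characterization_of_center}). Note first that $\S$, viewed inside $\M_{g,n}$ via $\iota$, is a conjugacy-invariant subset by the formula \eqref{twist_conjugation}, so the machinery of Section~\ref{adjoint_homomorphism} applies with $\M = \M_{g,n}$ and $S = \iota(\S)$.

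For the ``only if'' direction, suppose $\varphi \circ \iota$ is injective; equivalently, $\varphi$ is injective on the conjugacy-invariant subset $S = \iota(\S)$. Then Lemma~\ref{Lemma1} gives $\ker{\varphi} \subset \cz{\M_{g,n}}{\iota(\S)}$, and by Lemma~\ref{characterization_of_center} the right-hand side equals $\ctr{\M_{g,n}}$, so $\ker{\varphi} \subset \ctr{\M_{g,n}}$, as desired.

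For the ``if'' direction, suppose $\ker{\varphi} \subset \ctr{\M_{g,n}}$, and take $C_1, C_2 \in \S$ with $\varphi(\iota(C_1)) = \varphi(\iota(C_2))$, i.e.\ $\iota(C_1)\iota(C_2)^{-1} \in \ker{\varphi} \subset \ctr{\M_{g,n}}$. By Lemma~\ref{lemma_center}, the element $\iota(C_1)\iota(C_2)^{-1}$ lies in the center only when $C_1 = C_2$; hence $C_1 = C_2$, proving $\varphi \circ \iota$ is injective.

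I do not expect a genuine obstacle here: the statement is essentially a repackaging of Lemma~\ref{Lemma1} once one has the two preliminary lemmas in hand. The only point requiring a little care is the ``if'' direction, where one must use the refinement Lemma~\ref{lemma_center} (rather than just Fact~\ref{fact_injective}) to rule out the possibility that a nontrivial central element is a quotient of two distinct Dehn twists; this is exactly the reason Lemma~\ref{lemma_center} was stated in that strengthened form.
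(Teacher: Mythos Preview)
Your proof is correct and follows essentially the same route as the paper's: the ``only if'' direction invokes Lemma~\ref{Lemma1} together with Lemma~\ref{characterization_of_center}, and the ``if'' direction is exactly the application of Lemma~\ref{lemma_center} that you give. Your closing remark about why the strengthened form Lemma~\ref{lemma_center} (rather than merely Fact~\ref{fact_injective}) is needed is apt and matches the paper's logic.
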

\begin{proof}
 Suppose first that $\varphi \circ \iota$ is injective, and let $f \in \ker{\varphi}$.
 Then, for each $C \in \S$, 
 we have 
 \begin{align*}
  \varphi \circ \iota ( f(C) ) & = \varphi( f \cdot \iota( C ) \cdot f^{-1} ) 
                                      \quad \text{(by \eqref{twist_conjugation})} \\
                   & = \varphi(f) \varphi(\iota(C)) \varphi(f)^{-1} = \varphi \circ \iota(C).  
 \end{align*}
Therefore, $f(C) = C$. Hence, by Lemma \ref{trivial_action}, we have $f \in \ctr{\M_{g,n}}$.
 \par

Next, suppose conversely that $\ker\varphi \subset \ctr{\M_{g,n}}$, and we show that $\varphi \circ \iota$ is 
injective. If $\varphi \circ \iota (C_1) = \varphi \circ \iota (C_2)$ for $C_1$, $C_2 \in \S$, then we have
$\varphi( \iota(C_1) \iota(C_2)^{-1}) = 1$, and hence $\iota(C_1) \iota(C_2)^{-1} \in \ker{\varphi}$, which is
contained in $\ctr{\M_{g,n}}$. Now by Lemma \ref{lemma_center}, we have $C_1 = C_2$. 
This completes the proof.
\end{proof}
\par \medskip

We cannot go further along the same line of the once-punctured case, since $\S$ does not 
have a natural group structure. Instead, we consider, for a field $K$,
the $K$-vector space freely generated by
the set $\S$, and denote it by$K[\S]$. The natural action of $\M_{g,n}$ on $\S$ defines
the structure of an  $\M_{g,n}$-module on $K[\S]$ by linearity. Then the linearity condition for $\M_{g,n}$, 
``up to center'', is given by the following.
\begin{theorem} \label{linearity_up_to_center}
 The mapping class group $\M_{g,n}$ admits a finite dimensional linear representation 
 over $K$ with kernel contained in the center $\ctr{\M_{g,n}}$ if and only if $K[\S]$ has 
 an $\M_{g,n}$-invariant  subspace $V$ such that the dimension of $K[\S]/V$ is finite and 
 the projection $p: K[\S] \to K[\S]/V$ is injective on  the subset $\S$.
\end{theorem}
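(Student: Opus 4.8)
The plan is to apply Proposition~\ref{linearity}-style reasoning adapted to the ``up to center'' setting, using the subset $\iota(\S)$ and the homomorphism $\ads{\varphi}$ of Section~\ref{adjoint_homomorphism}; the new ingredient is to realize the target bijection group $B(\varphi\circ\iota(\S))$ inside a finite-dimensional $\GL$ by linearizing the permutation action on $\S$. For the forward direction, suppose $\varphi:\M_{g,n}\to\GL{n,K}$ has $\ker{\varphi}\subset\ctr{\M_{g,n}}$. By Lemma~\ref{injectivity_up_to_center}, $\varphi\circ\iota$ is injective on $\S$, so by Lemma~\ref{Lemma1} the homomorphism $\ads{\varphi}$ is identified with the natural permutation action of $\M_{g,n}$ on $\S$. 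The point is that this permutation action extends linearly to $K[\S]$, and the restriction of $\varphi$ gives an $\M_{g,n}$-equivariant linear map $\psi: K[\S]\to W$, where $W\subset\End{n,K}$ is the span of $\varphi\circ\iota(\S)$, sending the basis vector $C$ to $\varphi(\iota(C))$; this $W$ is finite-dimensional (dimension $\le n^2$). Then I would set $V=\ker\psi$: it is $\M_{g,n}$-invariant since $\psi$ is equivariant, of finite codimension since $\dim W<\infty$, and the induced map $K[\S]/V\to W$ is injective, so the composite $p:K[\S]\to K[\S]/V$ is injective on $\S$ precisely because $\psi(C)=\varphi(\iota(C))$ and $\varphi\circ\iota$ is injective on $\S$.

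For the converse, suppose $V\subset K[\S]$ is an $\M_{g,n}$-invariant subspace of finite codimension with $p:K[\S]\to K[\S]/V$ injective on $\S$. Set $W=K[\S]/V$, a finite-dimensional $\M_{g,n}$-module, giving a linear representation $\rho:\M_{g,n}\to\GL{W}$. I claim $\ker\rho\subset\ctr{\M_{g,n}}$. Indeed, if $f\in\ker\rho$, then $f$ acts trivially on $W$; since $p$ is equivariant and the images $p(C)$, $C\in\S$, are pairwise distinct (injectivity of $p$ on $\S$), the relation $p(f(C))=f\cdot p(C)=p(C)$ forces $f(C)=C$ for all $C\in\S$, i.e.\ $f$ fixes every isotopy class of essential simple closed curve. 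By the well-known fact underlying Lemma~\ref{characterization_of_center} (an element acting trivially on $\S$ lies in $\ctr{\M_{g,n}}$), or equivalently by noting $f\in\cz{\M_{g,n}}{\iota(\S)}=\ctr{\M_{g,n}}$ via \eqref{twist_conjugation} since $f$ centralizes each $\iota(C)$, we get $\ker\rho\subset\ctr{\M_{g,n}}$. Hence $\rho$ is the desired finite-dimensional representation.

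The main obstacle, and the place to be careful, is the equivariance and finite-codimension bookkeeping in the forward direction: one must check that the span $W$ of $\varphi\circ\iota(\S)$ in $\End{n,K}$ is genuinely $\M_{g,n}$-stable under the conjugation action $M\mapsto\varphi(f)M\varphi(f)^{-1}$ (this follows from \eqref{twist_conjugation}, exactly as in the proof of Lemma~\ref{Lemma3} where $\ads{\varphi}$ was shown to permute $\varphi(S)$), and that the resulting $\psi:K[\S]\to W$ intertwines the permutation $\M_{g,n}$-action on $K[\S]$ with this conjugation action — which is precisely formula~\eqref{Ads_Def}. Everything else is the routine linear algebra of quotienting by a kernel; the degree bound $\dim(K[\S]/V)\le n^2$ in the forward direction and the matching statement that any such $V$ of codimension $m$ yields a representation of degree $m$ come for free. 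I would also remark that this theorem is the $\iota(\S)$-analogue of Proposition~\ref{linearity}: it trades the deformation-space condition for a submodule condition on $K[\S]$, and that it covers $\M_g$ with $g\ge 3$ on the nose (center trivial) and $\mcg$ via the isomorphism $\mcg\cong\M_{g,1}/\ctr{\M_{g,1}}$, yielding Corollary~\ref{linearity_closed_case}.
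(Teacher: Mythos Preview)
Your proposal is correct and follows essentially the same route as the paper: in both directions the argument is identical to the paper's (the equivariant linear map $K[\S]\to\End{n,K}$ sending $C\mapsto\varphi(\iota(C))$, with $V$ its kernel, for the forward direction; the quotient representation on $K[\S]/V$ and the computation $p(f(C))=p(C)\Rightarrow f(C)=C$ via Lemma~\ref{characterization_of_center} for the converse). The only cosmetic difference is that you restrict the target to the span $W$ of $\varphi\circ\iota(\S)$ and then verify its stability, whereas the paper simply works in all of $\End{m,K}$ with the adjoint action and takes the kernel there---this avoids your stability check but changes nothing substantive.
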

\par 
\begin{proof}
Suppose first that $\varphi: \M_{g,n}  \to \GL{m,K}$ is a linear representation with 
$\ker{\varphi} \subset \ctr{\M_{g,n}}$. We define the  Adjoint representation 
$\Ad{\varphi} : \M_{g,n} \to \GL{\End{m,K}}$ by
     $$\Ad{\varphi}(f)(X) = \varphi(f) \cdot X \cdot \varphi(f)^{-1} \quad 
       \text{for $f \in \M_{g,n}$ and $X \in \End{m,K}$.}$$
This Adjoint representation defines the structure of an $\M_{g,n}$-module on $\End{m,K}$. 
The mapping $\varphi \circ \iota$ extends 
by linearity to a $K$-linear homomorphism, denoted by $\phi : K[\S] \to \End{m,K}$.
By the formula \eqref{twist_conjugation},  $\phi$ is also $\M_{g,n}$-equivariant. 
Now, take $V = \ker{\phi}$. Then, $V$ is an $\M_{g,n}$-invariant subspace of $K[\S]$, and 
the quotient $K[\S]/V$ is isomorphic to the image of $\phi$, which is therefore of finite dimension.
On the other hand, the restriction of $\phi$ on $\S$ is nothing but $\varphi \circ \iota$, which is 
injective by Lemma \ref{injectivity_up_to_center}. Therefore, the projection $p: K[\S] \to K[\S]/V$ 
is also injective on $\S$. This proves the only if part of the theorem.
\par

Next, suppose conversely that $V$ is an $\M_{g,n}$-invariant subspace of $K[\S]$ such that 
the quotient $K[\S]/V$ is of finite dimension and the projection $p: K[\S] \to K[S]/V$ is injective on 
$\S$. Then, the natural action of $\M_{g,n}$ on $\S$ induces a finite dimensional linear representation 
$\varphi_V : \M_{g,n} \to \GL{K[\S]/V}$. Assume $f \in \ker{\varphi_V}$. Then, for every $C \in \S$,
we have
   $$p(f(C)) = \varphi_V(f)(p(C)) = p(C).$$
Hence, by the injectivity of $p$ on $\S$, we have $f(C) = C$ for every $C \in \S$.
Then, Lemma \ref{trivial_action} implies $f \in \ctr{\M_{g,n}}$.
This completes the proof of Theorem \ref{linearity_up_to_center}.
\end{proof}
\par\medskip

Now, due to Paris--Rolfsen \cite[Theorem 5.6]{paris-rolfsen},
the center $\ctr{\M_{g,n}}$ is completely described as follows: 
\begin{enumerate}
 \item For the case of $n=0$, it holds $\M_{g,n} = \M_g$, and if $g \geq 3$, $\ctr{\M_g}$ is trivial; 
           if $g = 1$, or $2$, then $\ctr{\M_g} \cong \Z/2\Z$ and is generated by the hyperelliptic 
           involution.
 \item For the case of $n=1$ and $g=1$, $\ctr{\M_{1,1}} \cong \Z$  and is generated by the ``half-twist''
       along the simple closed curve parallel to the boundary component.
 \item Otherwise, $\ctr{\M_{g,n}}$ is a free abelian group of rank $n$, and is generated by 
       the Dehn twists along the simple closed curves parallel to the boundary components.
\end{enumerate}
In particular, Theorem \ref{linearity_up_to_center} gives a linearity condition for $\M_{g}$ 
for $g \geq 3$.
\begin{cor}\label{linearity_closed_case}
 Suppose $g \geq 3$. Then $\M_{g}$ is $K$-linear if and only if $K[\S]$ has an 
 $\M_g$-invariant subspace $V$ such that the quotient $K[\S]/V$ is of finite dimension and 
the projection $K[\S] \to K[\S]/V$ is injective on the set $\S$.
\end{cor}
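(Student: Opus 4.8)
The plan is to derive the corollary directly from Theorem \ref{linearity_up_to_center} by specializing to the boundaryless case $n = 0$. By the convention fixed at the start of Section \ref{closed}, the surface $\Sigma_{g,0}$ is the closed surface $\Sigma_g$, so that $\M_{g,0} = \M_g$; hence Theorem \ref{linearity_up_to_center} applies verbatim with $\M_{g,n}$ replaced by $\M_g$, and the $\M_{g,n}$-invariance condition on the subspace $V \subset K[\S]$ becomes precisely $\M_g$-invariance. The subspace-theoretic side of the equivalence therefore carries over without change.

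The one point that must be reconciled is the phrase ``with kernel contained in the center'' appearing in Theorem \ref{linearity_up_to_center} against the word ``$K$-linear'' in the corollary. Here I would invoke the Paris--Rolfsen computation of the center recalled just before the corollary: for $g \geq 3$ and $n = 0$ the center $\ctr{\M_g}$ is trivial. Consequently, for $g \geq 3$ a finite dimensional linear representation of $\M_g$ over $K$ has kernel contained in $\ctr{\M_g} = \{1\}$ exactly when it is faithful. Since $K$-linearity of a group is by definition the existence of a faithful finite dimensional linear representation over $K$, the two notions coincide in this range.

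Combining these two observations, the equivalence of Theorem \ref{linearity_up_to_center} specialized to $g \geq 3$, $n = 0$ reads exactly as: $\M_g$ admits a faithful finite dimensional representation over $K$, i.e.\ is $K$-linear, if and only if $K[\S]$ has an $\M_g$-invariant subspace $V$ of finite codimension on which the natural projection $K[\S] \to K[\S]/V$ is injective on the subset $\S$. This is precisely the statement of the corollary, so the proof is complete once the above identifications are recorded.

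I do not anticipate a genuine obstacle: all substantive content is carried by Theorem \ref{linearity_up_to_center}, and the only thing to watch is the bookkeeping by which triviality of $\ctr{\M_g}$ upgrades ``kernel in the center'' to ``faithful.'' I would emphasize in writing that the hypothesis $g \geq 3$ is essential to this passage and cannot be relaxed: for $g = 1, 2$ the center is $\Z/2\Z$, generated by the hyperelliptic involution, so ``kernel contained in the center'' is strictly weaker than faithfulness and the clean equivalence with $K$-linearity would break down.
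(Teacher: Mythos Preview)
Your proposal is correct and follows exactly the paper's own approach: the corollary is obtained from Theorem \ref{linearity_up_to_center} by specializing to $n=0$ and using the Paris--Rolfsen result that $\ctr{\M_g}$ is trivial for $g \geq 3$, so that ``kernel contained in the center'' becomes ``faithful.'' Your additional remark on why $g \geq 3$ cannot be relaxed is also accurate and consistent with the paper's discussion.
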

\begin{remark}
 By Paris--Rolfsen \cite[Corollary 4.2]{paris-rolfsen}, 
 $\M_g$ contains $\M_{h,1}$ as a subgroup  if $0 \leq h < g$. Hence the linearity of $\M_g$ for $g \geq 2$ 
 implies the linearity of $\M_{h,1}$  for every $h$ with $h < g$. On the other hand, 
 since the mapping class group $\M_{h,*}$ of the once-punctured surface $\Sigma_{h,*}$ 
 is isomorphic to the quotient 
 of $\M_{h,1}$ by its center, the linearity of $\M_{h,1}$  implies that of $\M_{h,*}$ 
 ({\em c.f.} Brendle--Hamidi-Tehrani \cite[Lemma 3.4]{brendle-hamidi}). 
 As mentioned in Introduction,
 the first $h$ for which the linearity of $\M_{h,*}$ is unknown is $2$. 
 Therefore, the linearity of $\M_{2,*}$ seems an interesting problem, at present.
\end{remark}
\par

\providecommand{\bysame}{\leavevmode\hbox to3em{\hrulefill}\thinspace}
\providecommand{\MR}{\relax\ifhmode\unskip\space\fi MR }
\providecommand{\MRhref}[2]{%
  \href{http://www.ams.org/mathscinet-getitem?mr=#1}{#2}
}
\providecommand{\href}[2]{#2}

\vspace{12pt}
\noindent
\textsc{Yasushi Kasahara \\
Department of Mathematics \\
  Kochi University of Technology \\ Tosayamada,  Kami City, Kochi \\ 
  782-8502 Japan} \\
E-mail: {\tt kasahara.yasushi@kochi-tech.ac.jp}

\end{document}